\crefname{empty}{}{}
\newlist{alist}{enumerate}{1}
\setlist[alist]{label=(\alph*),itemsep=.1ex, ref=(\alph*)}
\newlist{rlist}{enumerate}{1}
\setlist[rlist]{label=(\roman*),itemsep=.1ex, ref=(\roman*)}
\newlist{nlist}{enumerate}{1}
\setlist[nlist]{label=(\arabic*),itemsep=.1ex, ref=(\arabic*)}
\crefname{equation}{}{}
\Crefname{figure}{Figure}{Figures}
\crefname{page}{page}{pages}
\Crefname{enumi}{}{}
\Crefname{subsection}{Subsection}{Subsections}
\def\theoremname{Theorem}%
\def\propositionname{Proposition}%
\def\lemmaname{Lemma}%
\def\corollaryname{Corollary}%
\def\definitionname{Definition}%
\def\conventionname{Convention}%
\def\axiomname{Axiom}%
\def\remarkname{Remark}%
\def\examplename{Example}%
\def\questionname{Question}%
\def\constructionname{Construction}%
\def\notationname{Notation}%
\def\assumptionname{Assumption}%
\def\conjecturename{Conjecture}%
\newtheorem{thm}{\theoremname}[section]
\theoremstyle{plain}
\newtheorem{theorem}[thm]{\theoremname}
\newtheorem{proposition}[thm]{\propositionname}
\newtheorem{lemma}[thm]{\lemmaname}
\newtheorem{corollary}[thm]{\corollaryname}
\theoremstyle{definition}
\newtheorem{definition}[thm]{\definitionname}
\newtheorem{remark}[thm]{\remarkname}
\newcommand{\abs}[1]{\lvert#1\rvert}
\newcommand{\norm}[1]{\lVert#1\rVert}
\newcommand{\field}[1]{\ensuremath{\mathbb{#1}}}
\newcommand{\clopen}[2]{\char"5B #1,#2\char"29}
\DeclareMathOperator{\card}{card}
\begin{document}
	
\title[Linear equations in Lorentz spaces]{Linear functional equations and their solutions in Lorentz spaces}	

\selectlanguage{polish}
\author[J. Morawiec]{Janusz Morawiec}
\address{Instytut Matematyki{}\\ Uniwersytet Śląski{}\\	Bankowa 14, PL-40-007 Katowice{}\\ Poland}
\email{morawiec@math.us.edu.pl}
\author[T. Zürcher]{Thomas Zürcher}
\address{Instytut Matematyki{}\\ Uniwersytet Śląski{}\\ Bankowa 14, PL-40-007 Katowice{}\\ Poland}
\email{thomas.zurcher@us.edu.pl}
\selectlanguage{English}
\subjclass{Primary 47A50; Secondary 26A24, 39B12, 47B38}
\keywords{linear operators, approximate differentiability, Luzin's condition N, functional equations, Lorentz spaces}
	
\begin{abstract}
Assume that $\Omega\subset \field{R}^k$ is an open set, $V$ is a separable Banach space over a field $\mathbb K\in\{\mathbb R,\mathbb C\}$ and $f_1,\ldots,f_N \colon\Omega\to \Omega$, $g_1,\ldots, g_N\colon\Omega\to \field{K}$, $h_0\colon \Omega\to V$ are given functions. We are interested in the existence and uniqueness of solutions $\varphi\colon \Omega\to V$ of the linear functional equation $\varphi=\sum_{k=1}^{N}g_k\cdot(\varphi\circ f_k)+h_0$ in Lorentz spaces.
\end{abstract}
	
\maketitle


\section{Introduction}
Throughout this paper we fix $k,N\in\mathbb N$, an open set $\Omega\subset \field{R}^k$, a separable Banach space $(V,\|\cdot\|_V)$ over a field $\mathbb K\in\{\mathbb R,\mathbb C\}$ and functions $f_1,\ldots,f_N \colon\Omega\to \Omega$, $g_1,\ldots, g_N\colon\Omega\to \field{K}$ and $h_0\colon \Omega\to V$. We are interested in the existence and uniqueness of a special solution $\varphi\colon\Omega\to V$ of the following linear equation
\begin{equation}\label{e}
\varphi(x)=\sum_{n=1}^{N}g_n(x)\varphi(f_n(x))+h_0(x).
\end{equation}
Different solutions of equation~\cref{e} have been studied by many authors (e.g.\ \cite[Chapter~XIII]{Kuczma1968}, \cite[Chapter~6]{KuczmaChoczewskiGer1990}, \cite[Chapter~5]{BelitskiiTkachenko2003}, \cite[Section~4]{BaronJarczyk2001} and the references therein). In order to specify what special solution we are talking about, we need to introduce some notations. But before, let us note that this paper is a continuation of the authors' paper \cite{MorawiecZurcher} and a reader who is already familiar with the content of that paper may want to jump to~\cref{Introduction Lorentz}.

Denote by $\mathcal F$ the linear space of all functions $\psi\colon\Omega\to V$ and fix a subspace~$\mathcal F_0$ of~$\mathcal F$. Then define the operator $P\colon\mathcal F_0\to\mathcal F$ by
\begin{equation}\label{P}
P\psi=\sum_{n=1}^{N}g_n\cdot(\psi\circ f_n),
\end{equation}
and observe that it is linear and equation~\cref{e} can be written in the form
\begin{equation}\label{E}
\varphi=P\varphi+h_0.
\end{equation}

If equation~\cref{e} has a solution $\varphi\in\mathcal F_0$ such that $P\varphi\in\mathcal F_0$, then $h_0\in\mathcal F_0$. Conversely, if $h_0\in\mathcal F_0$, then for every solution $\varphi\in\mathcal F_0$ of equation~\cref{e} we have $P\varphi\in\mathcal F_0$. Therefore, if we want to look for solutions of equation~\cref{e} in $\mathcal F_0$, then it is quite natural to assume that $h_0\in\mathcal F_0$ and
\begin{equation}\label{PF}
P(\mathcal F_0)\subset\mathcal F_0.	
\end{equation}

\begin{remark}[{see \cite[Remark 1.1]{MorawiecZurcher}, cf.\ \cite[Remark 1.2]{MorawiecZurcher2019}}]\label{rem11}
Assume that $\mathcal F_0$ is equipped with a norm, $h_0\in\mathcal F_0$, and the operator~$P$ given by~\cref{P} satisfies~\cref{PF} and is continuous. If the series
\begin{equation}\label{elementary}
\sum_{n=0}^{\infty}P^n h_0
\end{equation}
converges, in the norm, to a function $\varphi\in\mathcal F_0$, then \cref{E} holds.
\end{remark}

From now on, the series~\cref{elementary} will be called the \emph{elementary solution} of equation~\cref{e} in $\mathcal F_0$, provided that it is a well-defined solution of  equation~\cref{e} belonging to $\mathcal F_0$. Let us note that it can happen that equation~\cref{e} has a solution in $\mathcal F_0$, however its elementary solution in $\mathcal F_0$ can fail to exist (see \cite[Example~1.4]{MorawiecZurcher2019}).

The investigation of the existence of the elementary solution of equation~\cref{e} in the case $\mathcal F_0=L^1([0,1],\field{R})$ was motivated by \cite{Nikodem1991} and studied in \cite{MorawiecZurcher2019}. Next, inspired by \cite{Matkowski1975}, the existence of the elementary solution of equation~\cref{e} in the case where $\mathcal F_0$ is a generalized Orlicz space was examined in \cite{MorawiecZurcher}.

The basic result on the existence and uniqueness of the elementary solution of equation~\cref{e} in $\mathcal F_0$ reads as follows.

\begin{theorem}[{see \cite[Theorem~1.2]{MorawiecZurcher}, cf. \cite[Theorem~3.2]{MorawiecZurcher2019}}]\label{BFS}
Assume that $\|\cdot\|$ is a complete norm in $\mathcal F_0$ and let $h_0\in\mathcal F_0$. If the operator $P$ given by~\cref{P} satisfies~\cref{PF} and is a contraction with contraction factor $\alpha$, then the elementary solution of equation~\cref{e} in $\mathcal F_0$ exists, it is the unique solution of equation~\cref{e} in $\mathcal F_0$ and $\norm{\sum_{k=m}^{\infty}P^k h_0}\leq\frac{\alpha^{m}}{1-\alpha}\norm{h_0}$.	
\end{theorem}


\section{Preliminaries}
Let $(X,\mathcal M,\mu)$  and $(Y,\mathcal N,\nu)$ be measure spaces. We say that $G\colon X\to Y$ satisfies \emph{Luzin's condition~N} if for every set $N\subset Y$ of measure zero the set $G(N)$ is also of measure zero. When we will integrate a function $\Phi\colon X\to V$, we will use the Bochner integral (for details see e.g.\ \cite[Sections 3.1~and~3.2]{HeinonenKoskelaShanmugalingamTyson2015}). Recall that a function $\Phi\colon X\to V$ is \emph{Bochner--measurable} if it is equal almost everywhere to the limit of a sequence of measurable simple functions, i.e., $\Phi(x)=\lim_{n\to\infty}\Phi_n(x)$ for almost all $x\in X$, where each of the functions $\Phi_n\colon X\to V$ has a finite range and $\Phi_n^{-1}(\{v\})$ is measurable for every $v\in V$. As we will work with Bochner--integrable solutions of equation~\cref{e}, we need the following observation.

\begin{lemma}[{see \cite[Lemma~2.1]{MorawiecZurcher}}]\label{lem21}
Assume that $(X,\mathcal M,\mu)$ is a complete $\sigma$\nobreakdash--finite measure space. Let $F\colon X\to X$, $H\colon X\to V$ and $G\colon X\to \field{K}$ be measurable functions. If for all sets $N\subset X$ of measure zero the set $F^{-1}(N)$ is also of measure zero, then
the functions $H\circ F$ and $G\cdot(H\circ F)$ are measurable.
\end{lemma}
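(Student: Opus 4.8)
The plan is to reduce everything to the definition of Bochner-measurability and to use the hypothesis on $F$ solely to transport almost-everywhere statements along $F$. Since $V$ is separable and $H$ is measurable, $H$ is Bochner-measurable, so there are measurable simple functions $H_n\colon X\to V$ and a null set $N\subset X$ with $H_n(y)\to H(y)$ for every $y\in X\setminus N$. Writing each $H_n=\sum_{j}v_{j}^{(n)}\1_{A_{j}^{(n)}}$ with $v_j^{(n)}\in V$ and $A_j^{(n)}\in\mathcal M$, the composition becomes $H_n\circ F=\sum_{j}v_j^{(n)}\1_{F^{-1}(A_j^{(n)})}$, and since $F$ is measurable each $F^{-1}(A_j^{(n)})$ lies in $\mathcal M$; hence $H_n\circ F$ is again a measurable simple function with values in $V$.

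It then remains to identify the almost-everywhere limit of the functions $H_n\circ F$. For every $x\in X$ with $F(x)\notin N$ we have $H_n(F(x))\to H(F(x))$, that is, $(H_n\circ F)(x)\to(H\circ F)(x)$; the exceptional set is exactly $F^{-1}(N)$. This is the only place where the assumption on $F$ enters: since $N$ is of measure zero, $F^{-1}(N)$ is of measure zero as well, so $H_n\circ F\to H\circ F$ almost everywhere. Thus $H\circ F$ is the almost-everywhere limit of measurable simple functions, i.e.\ Bochner-measurable, where completeness of the space keeps this almost-everywhere notion robust.

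For the product $G\cdot(H\circ F)$ I would approximate both factors simultaneously. The scalar function $G$, being measurable, is the almost-everywhere (indeed pointwise) limit of measurable simple functions $G_m\colon X\to\field{K}$, and by the previous step $H\circ F$ is the almost-everywhere limit of measurable simple functions $\Phi_m$. The products $G_m\cdot\Phi_m$ are again measurable simple functions (a finite sum of terms of the form $c\,v\,\1_{C\cap B}$), and on the intersection of the two full-measure sets one has $G_m\cdot\Phi_m\to G\cdot(H\circ F)$ by continuity of the scalar multiplication $\field{K}\times V\to V$; this intersection is again of full measure, so $G\cdot(H\circ F)$ is Bochner-measurable. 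The one genuine point of the argument, and the only step invoking a nontrivial hypothesis, is the transport of the null set $N$ to the null set $F^{-1}(N)$ in the second paragraph; everything else is the routine stability of (Bochner-)measurable functions under composition with measurable maps, under products, and under almost-everywhere limits.
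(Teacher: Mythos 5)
Your argument is correct. The paper gives no proof of this lemma, merely citing Lemma~2.1 of the authors' earlier work; your reduction to Bochner--measurability --- approximating $H$ by measurable simple functions $H_n$, noting that $H_n\circ F$ is again simple and measurable, and using the hypothesis on $F$ exactly once to transport the exceptional null set $N$ to the null set $F^{-1}(N)$ (with completeness of the measure space absorbing the a.e.\ exceptional set, and separability of $V$ guaranteeing via Pettis that measurability of $H$ yields such an approximating sequence) --- is precisely the standard argument one would expect there, as is the product step via simple approximations of $G$.
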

	
The next result we want to apply is a change of variable formula from \cite{Hajlasz1993}. To formulate this theorem, we need to introduce some definitions and notions.

Let $F\colon\Omega\to\mathbb R^k$ be measurable. We say that a linear mapping $L\colon\mathbb R^k\to\mathbb R^k$ is an \emph{approximate differential} of $F$ at $x_0\in\Omega$ if for every $\varepsilon>0$ the set
\begin{equation*}
\left\{x\in\Omega\setminus\{x_0\}: \frac{\|F(x)-F(x_0)-L(x-x_0)\|}{\|x-x_0\|}<\varepsilon\right\}
\end{equation*}
has $x_0$ as a density point (see \cite[Section~2]{Whitney1951}, cf.~\cite[Chapter IX.12]{Saks1964}).
We say that $F$ is \emph{approximately differentiable} at $x_0$ if the approximate differential of $F$ at $x_0$ exists. To simplify notation, we will denote the approximate differential of a function $F\colon\Omega\to\mathbb R^k$ at~$x_0$ by $F'(x_0)$. Moreover, if a function $F\colon\Omega\to\mathbb R^k$ is almost everywhere approximately differentiable, then as usual we denote by $F'$ the function $\Omega\ni x\mapsto F'(x)$, adopting the convention that $F'(x)=0$ for every point $x\in\Omega$ at which $F$ is not approximately differentiable. If $E\subset\Omega$, then the function $N_F(\cdot,E)\colon\mathbb{R}^k\to\mathbb{N}\cup \{\infty\}$ defined by
\begin{equation*}
N_F(y,E)=\card(F^{-1}(y)\cap E)
\end{equation*}
is called the \emph{Banach indicatrix} of~$F$.

As we are working with functions equal almost everywhere, we need the following observation.

\begin{lemma}[{see \cite[Lemma 2.1]{MorawiecZurcher2019}, cf. \cite[Lemma 2.1]{MorawiecZurcher}}]\label{lemm22}
Let $F_1,F_2\colon \Omega\to\mathbb R^k$ be functions such that $F_1=F_2$ almost everywhere. If $F_1$ is approximately differentiable almost everywhere, then $F_2$ is as well. Moreover, whenever $F_1$~or~$F_2$ is approximately differentiable at a point, the other function is as well, and the approximate derivatives agree at this point.
\end{lemma}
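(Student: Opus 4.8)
The plan is to reduce everything to the elementary fact that a Lebesgue null set is invisible to the notion of a density point. Write $Z=\{x\in\Omega: F_1(x)\neq F_2(x)\}$, so that $\abs{Z}=0$ by hypothesis (throughout, $\abs{\cdot}$ denotes Lebesgue measure, and I use that $F_1,F_2$ are measurable, as the definition of approximate differential presupposes). Since $Z\cap B(x_0,r)\subset Z$ for every ball $B(x_0,r)$, we have $\abs{Z\cap B(x_0,r)}=0$ for all $x_0$ and all $r>0$; consequently, if two sets $A,A'\subset\Omega$ satisfy $A\setminus Z=A'\setminus Z$, then $\abs{A\cap B(x_0,r)}=\abs{A'\cap B(x_0,r)}$ for every ball, and therefore $x_0$ is a density point of $A$ if and only if it is one of $A'$. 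This is the only geometric input; the rest is bookkeeping.

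The core step transfers approximate differentiability at a fixed base point. Fix $x_0\in\Omega\setminus Z$, so that $F_1(x_0)=F_2(x_0)$, and suppose $F_1$ is approximately differentiable at $x_0$ with approximate differential $L=F_1'(x_0)$. For $\varepsilon>0$ put
\[
A_i^{\varepsilon}=\left\{x\in\Omega\setminus\{x_0\}: \frac{\norm{F_i(x)-F_i(x_0)-L(x-x_0)}}{\norm{x-x_0}}<\varepsilon\right\},\qquad i\in\{1,2\}.
\]
For every $x\in\Omega\setminus(Z\cup\{x_0\})$ one has $F_1(x)=F_2(x)$ and $F_1(x_0)=F_2(x_0)$, so the two quotients defining $A_1^{\varepsilon}$ and $A_2^{\varepsilon}$ coincide; hence $A_1^{\varepsilon}\setminus Z=A_2^{\varepsilon}\setminus Z$. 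By the first paragraph, $x_0$ is a density point of $A_2^{\varepsilon}$ because it is one of $A_1^{\varepsilon}$. As $\varepsilon>0$ was arbitrary, $F_2$ is approximately differentiable at $x_0$ with $F_2'(x_0)=L=F_1'(x_0)$, and the construction is symmetric in the two indices.

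Both assertions then follow. If $F_1$ is approximately differentiable on $\Omega\setminus N_1$ with $\abs{N_1}=0$, then for every $x_0\in\Omega\setminus(N_1\cup Z)$ the core step yields approximate differentiability of $F_2$ at $x_0$; since $\abs{N_1\cup Z}=0$, the function $F_2$ is approximately differentiable almost everywhere, which is the first claim. For the second claim, the core step and its symmetric counterpart show that at each point of $\Omega\setminus Z$ one function is approximately differentiable exactly when the other is, and that the approximate differentials then coincide. The equality of the two differentials ultimately rests on their uniqueness, which I would verify in the standard way: if $L$ and $\tilde L$ were both approximate differentials of a single function at $x_0$, then the set on which $\norm{(L-\tilde L)(x-x_0)}<2\varepsilon\norm{x-x_0}$ would have $x_0$ as a density point for every $\varepsilon>0$, forcing $L=\tilde L$.

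The one genuinely delicate point — and the place where the hypothesis must be used with care — is the value of the functions at the base point $x_0$ itself. Approximate differentiability at $x_0$ really does depend on $F_i(x_0)$: altering a function at the single point $x_0$ can destroy it, since the quotient then blows up like $\norm{x-x_0}^{-1}$. Thus the clean identification $A_1^{\varepsilon}\setminus Z=A_2^{\varepsilon}\setminus Z$ is available only after restricting to the co-null set $\{F_1=F_2\}=\Omega\setminus Z$; this restriction is harmless for both the almost-everywhere statement and the agreement of the derivatives, and once it is in place no analytic estimate is left, the whole argument being driven by the measure-zero invariance of density points recorded in the first paragraph.
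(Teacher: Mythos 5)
The paper states this lemma only by citation (to \cite[Lemma 2.1]{MorawiecZurcher2019}) and contains no proof of its own, so there is nothing to compare against line by line; your argument is correct and is the standard one, resting on the single fact that density points are insensitive to altering a set of measure zero, so that $A_1^{\varepsilon}$ and $A_2^{\varepsilon}$ have the same density at any base point $x_0$ with $F_1(x_0)=F_2(x_0)$. Your closing caveat is also exactly the right one: the pointwise ``moreover'' clause genuinely requires $F_1(x_0)=F_2(x_0)$ (changing a function at the base point alone destroys approximate differentiability there), so the transfer is valid on the co-null set $\{F_1=F_2\}$, which is all that the almost-everywhere conclusion and the paper's subsequent use of the lemma require.
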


Now we are in a position to formulate the change of variable formula in which $J_F$ denotes the determinant of the Jacobi matrix of~$F$.

\begin{theorem}[{see \cite[Theorem~2]{Hajlasz1993}}]\label{Hajlasz}
Assume that $F\colon \Omega\to\mathbb{R}^k$ is a measurable function satisfying Luzin's condition~N and being almost everywhere approximately differentiable. If $H\colon \mathbb{R}^k\to \mathbb{R}$ is a measurable function, then for every measurable set $E\subset \Omega$ the following statements are true:
\begin{enumerate}[label={\rm (\roman*)}]
\item\label{Hajlasz Measurable} The functions $(H\circ F)\abs{J_F}$ and $H N_F(\cdot,E)$ are measurable;
\item\label{Hajlasz Formula} If $H\geq 0$, then
\begin{equation}\label{change}
\int_E (H\circ F)(x)|J_F(x)|\, dx=\int_{\mathbb{R}^k}H(y)N_F(y,E)\,dy;
\end{equation}
\item\label{Hajlasz Integrability} If one of the functions $(H\circ F)\abs{J_F}$ and $H N_F(\cdot,E)$ is integrable $($for $(H\circ F)\abs{J_F}$ integrability is considered with respect to~$E$$)$, then so is the other and \cref{change} holds.
\end{enumerate}
\end{theorem}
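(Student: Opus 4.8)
The plan is to reduce the statement to the classical area formula for smooth (or Lipschitz) maps by exploiting a Lusin-type approximation of approximately differentiable functions. Since $F$ is approximately differentiable almost everywhere, a theorem of Federer (the almost-everywhere version of the Whitney extension theorem) provides, for each $\varepsilon>0$, a map of class $C^1$ coinciding with $F$ outside a set of measure $<\varepsilon$; iterating this and discarding a residual null set yields a decomposition
\begin{equation*}
\Omega=Z\cup\bigcup_{i=1}^{\infty}\Omega_i,
\end{equation*}
where $Z$ is null, the sets $\Omega_i$ are measurable and pairwise disjoint, and on each $\Omega_i$ the function $F$ coincides with a map $F_i\colon\field{R}^k\to\field{R}^k$ of class $C^1$. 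By \cref{lemm22}, the approximate derivatives of $F$ and $F_i$ agree at almost every point of $\Omega_i$, so $J_F=J_{F_i}$ almost everywhere on $\Omega_i$. Consequently, after intersecting with the given measurable set $E$, the integrand $(H\circ F)\abs{J_F}$ coincides almost everywhere on $E\cap\Omega_i$ with $(H\circ F_i)\abs{J_{F_i}}$, and $N_F(\cdot,E\cap\Omega_i)=N_{F_i}(\cdot,E\cap\Omega_i)$.

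On each piece I would invoke the classical area formula for $F_i$ on the measurable set $E\cap\Omega_i$. That formula already supplies the measurability of $(H\circ F_i)\abs{J_{F_i}}$ and of $H\,N_{F_i}(\cdot,E\cap\Omega_i)$ (the former because on $\{J_{F_i}\neq 0\}$ the map $F_i$ is, by the inverse function theorem, locally a diffeomorphism and hence pulls back null sets to null sets, while the product vanishes where $J_{F_i}=0$), together with the identity
\begin{equation*}
\int_{E\cap\Omega_i}(H\circ F_i)(x)\abs{J_{F_i}(x)}\,dx=\int_{\field{R}^k}H(y)\,N_{F_i}(y,E\cap\Omega_i)\,dy
\end{equation*}
for nonnegative measurable $H$. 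Summing measurability over $i$ and adjoining $Z$ (which contributes a function supported on a null set, hence measurable by completeness of Lebesgue measure) gives statement (i), valid for signed $H$ as well since $N_F(\cdot,E)$ is then measurable and $H$ is measurable.

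To obtain (ii) I would add the per-piece identities and pass the countable sum inside the integral by monotone convergence. The left-hand sides sum to $\int_E(H\circ F)\abs{J_F}\,dx$ because $Z$ is null. On the right-hand side the decomposition gives $N_F(y,E)=N_F(y,E\cap Z)+\sum_i N_F(y,E\cap\Omega_i)$, and here Luzin's condition~N is decisive: $N_F(\cdot,E\cap Z)$ is supported on $F(Z)$, which is null, so it vanishes for almost every $y$, and the summed right-hand sides equal $\int_{\field{R}^k}H(y)\,N_F(y,E)\,dy$. Finally, (iii) follows by applying (ii) separately to $H^+$ and $H^-$: the resulting integrals of $\abs{H}$ against the two weights coincide, so integrability of one integrand forces integrability of the other, and subtracting the two equalities yields \cref{change} for signed $H$.

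The main obstacle is not any single estimate but the structural input that licenses the reduction: the Lusin-type approximation writing $F$ as a countable union of $C^1$ pieces up to a null set, and the verification (through \cref{lemm22}) that the approximate Jacobian transfers correctly to each piece. Once this is in place, the only genuinely new ingredient beyond the classical area formula is the use of Luzin's condition~N to discard the exceptional set $Z$ on the image side; without it that set could inflate the Banach indicatrix $N_F(\cdot,E)$ on a set of positive measure and break the identity.
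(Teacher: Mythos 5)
The paper does not prove this statement at all: it is imported verbatim as Theorem~2 of Haj\l asz's 1993 paper on the change of variables formula under minimal assumptions, so there is no internal proof to compare against. Your argument is, in substance, Haj\l asz's own: decompose $\Omega$ up to a null set $Z$ into countably many measurable pieces on which $F$ agrees with $C^1$ maps (Federer's a.e.\ version of the Whitney extension theorem), apply the classical area formula on each piece, sum by monotone convergence, and use Luzin's condition~N exactly once, to kill the contribution of $Z$ on the image side. The structure is sound and the two points that usually trip people up are handled correctly: the measurability of $(H\circ F_i)\abs{J_{F_i}}$ for merely Lebesgue measurable $H$ (via local invertibility on $\{J_{F_i}\neq 0\}$ and vanishing of the product elsewhere), and the reduction of (iii) to (ii) through $H^{+}$ and $H^{-}$. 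One imprecision: \cref{lemm22} does not literally give you $J_F=J_{F_i}$ a.e.\ on $\Omega_i$, since that lemma assumes the two functions agree almost everywhere on all of $\Omega$, whereas $F$ and $F_i$ agree only on $\Omega_i$. What you actually need is the standard fact that at every density point of $\Omega_i$ at which $F_i$ is differentiable, $F$ is approximately differentiable with the same differential (and that the approximate differential is unique at such points); this is a one-line verification from the definition, but it is a different statement from the cited lemma and should be stated as such.
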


Now we are ready to formulate the main assumption about the functions that were fixed at the beginning of this paper. The assumption reads as follows.

\begin{enumerate}[label={\rm (H)}]
\item\label{H} \textit{The functions $f_1,\ldots,f_N$ are measurable and almost everywhere approximately differentiable and satisfy Luzin's condition~N. For all $n\in\{1,\ldots, N\}$ and sets $M\subset\mathbb R^k$ of measure zero the set $f_n^{-1}(M)$ is of measure zero. There exists $K\in\mathbb N$ such that for every $n\in\{1,\ldots, N\}$ the set $\{x\in \Omega:\card{f_n^{-1}(x)}>K\}$ is of measure zero. The functions $g_1,\ldots,g_N $ and $h_0$ are measurable.
Moreover,
\begin{equation*}\label{L}
\quad\quad L=\max\{l\in\{1,\ldots,N\}:l_{n_1,\ldots,n_l}>0\hbox{ for some }n_1<n_2<\cdots<n_l\},
\end{equation*}
where $l_{n_1,\ldots,n_l}$ is the $k$-dimensional Lebesgue measure of the intersection $\bigcap_{i=1}^l f_{n_i}(\Omega)$ for all $l\in\{1,\ldots,N\}$ and distinct $n_1,\ldots,n_l\in\{1,\ldots,N\}$.}
\end{enumerate}


\section{Lorentz spaces of complex valued functions}\label{Introduction Lorentz}
In this paper we are interested in the existence of the elementary solution of equation~\cref{e} in  $\mathcal F_0$ being a Lorentz space. In fact, we are interested in assumptions guaranteeing that the elementary solution of equation~\cref{e} in a given Lorentz space exists, and moreover, that equation~\cref{e} has no other solutions in this Lorentz space.

Lorentz spaces were introduced originally in \cite{Lorentz1950}. They play an important role when studying generalizations of Sobolev maps and are used to obtain sharp conditions for Sobolev maps to be differentiable almost everywhere or to send sets of measure zero to sets of measure zero (see \cite{KauhanenKoskelaMaly1999} for more details). Lorentz spaces are also important in interpolation theory (see e.g.\ \cite[Theorem~3.5.15]{EdmundsEvans2004} or \cite[Theorem~4.4.13]{BennettSharpley1988}).

We begin our investigations with Lorentz spaces that consist of complex or real valued functions, i.e.\ with the case where $V=\mathbb K$. For the convenience of the reader, following  \cite{MalySwansonZiemer2009} we recall some basic definitions and facts for our need. More details on Lorentz spaces can be found e.g.\ in \cite{SteinWeiss1971,BennettSharpley1988,EdmundsEvans2004}.

\begin{definition}[{see \cite[Definition~2.2]{MalySwansonZiemer2009}}]\label{def31}
A non-decreasing left-continuous and convex function $\Psi\colon [0,\infty)\to[0,\infty]$ is said to be a \emph{Young function}, if
\begin{equation*}
\lim_{t\to 0+}\Psi(t)=\Psi(0)=0\quad\hbox{and}\quad\lim_{t\to \infty}\Psi(t)=\infty.
\end{equation*}
\end{definition}

From now on the symbol~$\Psi$ is reserved for Young functions only.

\begin{definition}[{see \cite[Definition~2.4]{MalySwansonZiemer2009}}]\label{def32}
A Young function $\psi\colon [0,\infty)\to[0,\infty)$ is said to satisfy \emph{condition~$\Delta_2$ globally}, if there exists a real number $d\in(1,\infty)$ such that
\begin{equation*}
\psi(2t)\leq d\psi(t)\quad \text{for every }t\in(0,\infty).
\end{equation*}
\end{definition}

To the end of this paper we fix a strictly increasing Young function $\psi$ satisfying the condition $\Delta_2$ globally and such that
\begin{equation}\label{N}
\lim_{t\to 0+}\frac{\psi(t)}{t}=\lim_{t\to \infty}\frac{t}{\psi(t)}=0.
\end{equation}

According to \cite[page~35]{MalySwansonZiemer2009} the fixed function $\psi$ has left-sided and right-sided derivatives, which coincide except on a possibly countable set. When we write $\psi_r'$, we refer henceforth to the right-sided derivative. Note that $\psi$ satisfies~\cref{N} if and only if 	$0<\psi_r'(t)<\infty$ for every $t\in(0,\infty)$, $\psi_r'(0)=0$ and 	$\lim_{t\to\infty}\psi_r'(t)=\infty$.

We now define a function $\tau\colon[0,\infty)\to[0,\infty)$ by putting
\begin{equation*}
\tau(t)=\begin{cases}
0,&\hbox{if }t=0,\\
\frac{1}{\psi(\frac{1}{t})},&\hbox{if }t\in(0,\infty).
\end{cases}
\end{equation*}
Note that the just defined function $\tau$ is a strictly increasing Young function satisfying condition $\Delta_2$ globally and
\begin{equation*}
\lim_{t\to 0+}\frac{\tau(t)}{t}=\lim_{t\to \infty}\frac{t}{\tau(t)}=0
\end{equation*}
(see \cite[page 68]{MalySwansonZiemer2009}).

Denote by $\mu$ the $k$-dimensional Lebesgue measure on $\mathbb R^k$ and by $\mathcal M_{\mathbb K}$ the space of all Lebesgue measurable functions from $\Omega$ to $\mathbb K$.

\begin{definition}[{see \cite[Definition~9.1]{MalySwansonZiemer2009}}]\label{def33}
If $f\in\mathcal M_{\mathbb K}$, then the function $\mu_f\colon \clopen{0}{\infty}\to [0,\infty]$ defined by
\begin{equation*}
\mu_f(s)=\mu(\{x\in\Omega:|f(x)|>s\})
\end{equation*}
is said to be the \emph{distribution function} of~$f$.
\end{definition}

The linear space $L^{\psi,1}(\Omega,\mathbb K)$ consisting of all functions $f\in L^1_{\textnormal{loc}}(\Omega,\mathbb K)$ (i.e.\ $f\in L^1(K,\mathbb K)$ for every compact set $K\subset\Omega$) satisfying the following condition
\begin{equation*}
\int_0^{\infty}\tau^{-1}(\mu_f(s))\, ds<\infty	
\end{equation*}
is called the \emph{Lorentz space} (see \cite[Definition~9.2]{MalySwansonZiemer2009}). The Lorentz space equipped with the norm
\begin{equation}\label{Lnorm}
\norm{f}_{L^{\psi,1}(\Omega,\mathbb K)}=\int_0^{\infty}\tau^{-1}(\mu_f(s))\, ds
\end{equation}
is a Banach space (see \cite[page 68]{MalySwansonZiemer2009}; we will give a sketch of a proof of this fact at the beginning of~\cref{LSVV}).

The next observation relates Lorentz and Orlicz spaces. But before, let us recall the definition of Orlicz spaces.

\begin{definition}[{see \cite[Definition~2.7]{MalySwansonZiemer2009}}]\label{Def Orlicz}
Let $\Omega\subset \mathbb{R}^k$ and let $\Psi\colon [0,\infty)\to[0,\infty]$ be a Young function. The \emph{Orlicz spaces} $L^{\Psi}(\Omega,\mathbb{K})$ is the set of all measurable functions~$u\colon\Omega\to\mathbb K$ such that
\begin{equation*}
\int_{\Omega}\Psi\left(\left|\frac{u(x)}{t}\right|\right)\, dx<\infty
\end{equation*}
for some $t>0$.
\end{definition}

Equipping $L^\Psi(\Omega,\mathbb{K})$ with the \emph{Luxemburg norm}
\begin{equation*}
\norm{u}_{L^{\Psi}(\Omega,\mathbb{K})}=\inf\left\{t>0: \int_{\Omega}\Psi\left(\left|\frac{u(x)}{t}\right|\right)\, dx\leq 1\right\}
\end{equation*}
it becomes a Banach space (see e.g. \cite[Theorem 2.4]{Kozek1977}).

\begin{lemma}[{see \cite[Lemmas~9.3 and 9.4]{MalySwansonZiemer2009}}]\label{Orlicz--Lorentz}
Assume that $h\in\mathcal M_{\mathbb K}$.
\begin{enumerate}[label={\rm (\roman*)}]
\item\label{LO} If $h\in L^{\psi,1}(\Omega,\mathbb K)$ with $\norm{h}_{L^{\psi,1}(\Omega,\mathbb K)}=1$, then there exists a Young function~$\Psi$ such that
\begin{equation}\label{16}
\int_{\Omega}\Psi(|h(x)|)\, dx\leq 1=\int_0^\infty(\tau_r')^{-1}\Big(\frac{1}{\psi'(t)}\Big)\, dt.
\end{equation}
\item\label{OL} If $\Psi$ is a Young function satisfying~\cref{16}, then $h\in L^{\psi,1}(\Omega,\mathbb K)$ and
    $\norm{h}_{L^{\psi,1}(\Omega,\mathbb K)}\leq 2\norm{h}_{L^{\Psi}(\Omega,\mathbb K)}$.
\end{enumerate}
\end{lemma}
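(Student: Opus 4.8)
The plan is to prove \cref{Orlicz--Lorentz} in two parts, constructing in each direction an explicit Young function linking the Lorentz norm to an Orlicz modular. Throughout I would exploit the relation between $\psi$, its derivative $\psi_r'$, and the dual function $\tau$ defined by $\tau(t)=1/\psi(1/t)$, together with the elementary identity $(\tau_r')^{-1}(1/\psi'(t))=t$ that underlies the right-hand side of~\cref{16}. The normalization $\norm{h}_{L^{\psi,1}(\Omega,\mathbb K)}=1$ in part~\ref{LO} is precisely what makes the two-sided comparison clean, so the first step is to unwind the definition of the Lorentz norm~\cref{Lnorm} as $\int_0^\infty \tau^{-1}(\mu_h(s))\,ds=1$ and rewrite it via a change of order of integration (a layer-cake/Fubini argument on the region $\{(s,t):t<\tau^{-1}(\mu_h(s))\}$) into a form that looks like an Orlicz modular $\int_\Omega \Phi(|h(x)|)\,dx$ for a suitable $\Phi$.

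For part~\ref{LO} I would construct the candidate Young function $\Psi$ directly from $\psi$ and $\tau$. The natural choice is to set $\Psi$ so that its derivative encodes the substitution $s\mapsto \tau^{-1}(\mu_h(s))$; concretely one takes $\Psi$ built from $\psi_r'$ so that $\int_0^\infty \Psi(|h|)$ matches the rewritten Lorentz integral. Because $\psi$ satisfies~\cref{N} and $\Delta_2$ globally, $\psi_r'$ is strictly positive and finite on $(0,\infty)$ with the correct limits at $0$ and $\infty$, which guarantees that the constructed $\Psi$ is a genuine Young function (non-decreasing, convex, left-continuous, vanishing at $0$, tending to $\infty$). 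The key computation is to verify the chain $\int_\Omega \Psi(|h(x)|)\,dx \le 1 = \int_0^\infty (\tau_r')^{-1}(1/\psi'(t))\,dt$; the equality on the right is a pure identity about $\psi$ and $\tau$ (integrating the relation $(\tau_r')^{-1}(1/\psi'(t))=t$ against nothing, or rather recognizing it as the normalized total mass), while the inequality on the left follows from the distributional rewriting together with convexity of $\psi$ controlling the integrand.

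For part~\ref{OL} I would argue in the reverse direction. Assuming $\Psi$ satisfies~\cref{16}, the goal is to bound $\norm{h}_{L^{\psi,1}(\Omega,\mathbb K)}$ by $2\norm{h}_{L^\Psi(\Omega,\mathbb K)}$. Here I would normalize by the Luxemburg norm: writing $\lambda=\norm{h}_{L^\Psi(\Omega,\mathbb K)}$, the definition of the Luxemburg norm gives $\int_\Omega \Psi(|h|/\lambda)\,dx\le 1$. Applying the distribution-function identity again and the comparison~\cref{16} to the normalized function $h/\lambda$, one transfers the modular bound into a bound on $\int_0^\infty \tau^{-1}(\mu_{h/\lambda}(s))\,ds$, and the factor $2$ arises from a standard doubling estimate (using $\Delta_2$ for $\tau$, which holds by the stated properties) to pass from the Orlicz control back to the Lorentz integral. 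The homogeneity $\mu_{h/\lambda}(s)=\mu_h(\lambda s)$ then rescales the Lorentz integral by $\lambda$, yielding $\norm{h}_{L^{\psi,1}}\le 2\lambda$.

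The main obstacle I anticipate is the precise bookkeeping in the Fubini/layer-cake step that converts $\int_0^\infty \tau^{-1}(\mu_h(s))\,ds$ into an Orlicz modular $\int_\Omega \Phi(|h|)\,dx$: one must track exactly how $\tau^{-1}$ composed with the distribution function $\mu_h$ interacts with the derivative structure of $\psi$, and ensure that the function $\Phi$ emerging from the interchange of integration is the convex conjugate-type object that coincides with the required $\Psi$. Getting the constant in part~\ref{OL} to be exactly $2$ rather than a larger $\Delta_2$-dependent constant is the delicate point, and I expect it hinges on a sharp choice in the doubling step together with the left-continuity and convexity of $\tau$; this is where I would spend the most care, likely invoking the specific estimates from \cite[Lemmas~9.3 and 9.4]{MalySwansonZiemer2009} that the statement already cites.
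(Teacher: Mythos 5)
The paper itself offers no proof of this lemma --- it is imported verbatim from \cite[Lemmas~9.3 and 9.4]{MalySwansonZiemer2009} --- so your argument has to stand on its own, and as written it does not. The decisive error is your treatment of the right-hand side of \cref{16}: you assert the ``elementary identity'' $(\tau_r')^{-1}(1/\psi'(t))=t$ and conclude that $1=\int_0^\infty(\tau_r')^{-1}(1/\psi'(t))\,dt$ is ``a pure identity about $\psi$ and $\tau$''. Neither claim is true. For $\psi(t)=t^2$ one has $\tau(t)=t^2$, so $(\tau_r')^{-1}(1/\psi'(t))=1/(4t)$, not $t$, and the integral diverges; even if the pointwise identity held, $\int_0^\infty t\,dt=\infty\neq 1$. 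That displayed equality is not decoration: it is the normalization condition (in the source it is imposed on the constructed Young function, i.e.\ with $\Psi_r'$ in place of $\tau_r'$; as printed here it appears to be a typo) that couples the two halves of the lemma. Part~\cref{LO} requires you to \emph{build} a $\Psi$ depending on $h$ --- for instance via $\Psi_r'(s)=\tau^{-1}(\mu_h(s))/\mu_h(s)$, which is nondecreasing because $t\mapsto\tau^{-1}(t)/t$ is nonincreasing while $\mu_h$ is nonincreasing, and which gives $\int_\Omega\Psi(|h|)\,dx=\int_0^\infty\Psi_r'(s)\mu_h(s)\,ds=\norm{h}_{L^{\psi,1}(\Omega,\mathbb K)}=1$ by the layer-cake formula --- \emph{and} then to verify the normalization integral for that specific $\Psi$. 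You never write $\Psi$ down and never address the normalization, so part~\cref{LO} is not established.

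Part~\cref{OL} collapses for the same reason: having read the normalization as vacuous, your argument never uses it, yet without it the conclusion is false --- an arbitrary Young function $\Psi$ does not give $L^{\Psi}(\Omega,\mathbb K)\hookrightarrow L^{\psi,1}(\Omega,\mathbb K)$ with constant $2$ (try $\Psi=\psi$). The workable route is: set $\lambda=\norm{h}_{L^{\Psi}(\Omega,\mathbb K)}$ so that $\int_\Omega\Psi(|h|/\lambda)\,dx\le1$, apply Chebyshev to get $\mu_{h/\lambda}(s)\le 1/\Psi(s)$, hence $\norm{h/\lambda}_{L^{\psi,1}(\Omega,\mathbb K)}\le\int_0^\infty\tau^{-1}\bigl(1/\Psi(s)\bigr)\,ds$, and only then does the normalization integral, combined with a convexity estimate such as $\Psi(s)\ge\tfrac{s}{2}\Psi_r'(\tfrac{s}{2})$, bound this by $2$. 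The factor $2$ comes from that convexity step, not from a ``$\Delta_2$ doubling estimate for $\tau$'' as you suggest. In short, your proposal has the right general ingredients (layer cake, Luxemburg normalization, Chebyshev) but discards the one hypothesis on which both implications actually rest, and ultimately defers the substance back to the very lemmas being proved.
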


The next result is a counterpart of \cite[Theorem~4.6]{MorawiecZurcher} for Lorentz spaces.

\begin{theorem}\label{Lorentz}
Assume that \cref{H} holds and let $h_0\in L^{\psi,1}(\Omega,\mathbb K)$. If there exists a real constant $\alpha\in[0,\frac{1}{2})$ such that
\begin{equation}\label{14}
\setlength{\jot}{-3pt}
\begin{split}
|g_n(x)|\leq\alpha\min\left\{\frac{|J_{f_n}(x)|}{KL},\frac{1}{N}\right\}\quad&\hbox{for all }n\in\{1,\ldots,N\}\hbox{ and}\\&\hbox{almost all }x\in \Omega,
\end{split}
\end{equation}
then the elementary solution  of equation~\cref{e} in $L^{\psi,1}(\Omega,\mathbb K)$ exists, it is the unique solution of equation~\cref{e} in $L^{\psi,1}(\Omega,\mathbb K)$ and
\begin{equation*}
\left\|\sum_{k=m}^{\infty}P^k h_0\right\|_{L^{\psi,1}(\Omega,\mathbb K)} \leq\frac{(2\alpha)^{m}}{1-2\alpha}\norm{h_0}_{L^{\psi,1}(\Omega,\mathbb K)}.
\end{equation*}	
\end{theorem}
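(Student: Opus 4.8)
The plan is to reduce this theorem to the abstract Banach-space result, namely \cref{BFS}, by verifying its hypotheses with contraction factor $2\alpha$. Since $\alpha\in[0,\frac12)$ gives $2\alpha\in[0,1)$, once I show that $P$ maps $L^{\psi,1}(\Omega,\mathbb K)$ into itself and is a contraction with factor $2\alpha$, the existence, uniqueness, and the tail estimate all follow immediately from \cref{BFS} (recall that the Lorentz-space norm \cref{Lnorm} is complete). So the entire content is the operator estimate
\begin{equation*}
\norm{P\psi}_{L^{\psi,1}(\Omega,\mathbb K)}\le 2\alpha\,\norm{\psi}_{L^{\psi,1}(\Omega,\mathbb K)}\quad\text{for all }\psi\in L^{\psi,1}(\Omega,\mathbb K).
\end{equation*}

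First I would address measurability: by \cref{H} each $f_n$ pulls back null sets to null sets, so \cref{lem21} guarantees that each $g_n\cdot(\psi\circ f_n)$ is measurable, hence so is $P\psi$; the convention for $J_{f_n}$ and \cref{lemm22} let me work with the a.e.\ approximate derivatives without ambiguity. The heart of the matter is the norm estimate. Because the Lorentz norm is defined through the distribution function $\mu_{P\psi}$, it is awkward to estimate $P\psi$ directly, so the natural route is the Orlicz--Lorentz comparison of \cref{Orlicz--Lorentz}: I would normalize to $\norm{\psi}_{L^{\psi,1}}=1$, use part \ref{LO} to produce a Young function $\Psi$ with $\int_\Omega\Psi(|\psi|)\le 1$, estimate the Orlicz integral of $P\psi$ against $\Psi$ using the pointwise bound \cref{14}, and then convert back via part \ref{OL}, which is exactly where the factor $2$ in $2\alpha$ enters.

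The key analytic step is to control $\int_\Omega\Psi(|P\psi(x)|)\,dx$. Using \cref{14} and convexity of $\Psi$, I would bound $|P\psi(x)|\le\sum_n|g_n(x)|\,|\psi(f_n(x))|$ and distribute the weights so that the coefficients sum to at most $\alpha$; writing $P\psi$ as a convex combination with the $\frac1N$ and $\frac{|J_{f_n}|}{KL}$ factors, Jensen's inequality turns $\Psi(|P\psi|)$ into a sum of terms of the form $\Psi(|\psi\circ f_n|)\,|J_{f_n}|$ times the remaining weight. Then I apply the change-of-variables formula \cref{Hajlasz} to each such term:
\begin{equation*}
\int_\Omega \Psi(|\psi(f_n(x))|)\,|J_{f_n}(x)|\,dx=\int_{\mathbb R^k}\Psi(|\psi(y)|)\,N_{f_n}(y,\Omega)\,dy.
\end{equation*}
Here the bound $N_{f_n}(\cdot,\Omega)\le K$ a.e.\ (from the cardinality condition in \cref{H}) together with the definition of $L$ as the maximal overlap multiplicity among the images $f_{n_i}(\Omega)$ is what makes the constants $KL$ in \cref{14} combine correctly, so that summing over $n$ yields $\int_\Omega\Psi(|P\psi|)\le\alpha\int_\Omega\Psi(|\psi|)\le\alpha\le 1$.

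I expect the bookkeeping with the overlap constant $L$ and the indicatrix bound $K$ to be the main obstacle: one must split $P\psi$ as a convex combination in a way that is compatible both with applying Jensen and with the subsequent change of variables, and then argue that the measure of the region where $K$ or $L$ sheets overlap is controlled, so that the $\frac{|J_{f_n}|}{KL}$ and $\frac1N$ minimum in \cref{14} is exactly what is needed to absorb these multiplicities. Once the Orlicz estimate $\int_\Omega\Psi(|P\psi|)\le\alpha$ is in hand, the homogeneity of the Luxemburg norm gives $\norm{P\psi}_{L^\Psi}\le\alpha$ (for the normalized $\psi$), and \cref{Orlicz--Lorentz}\ref{OL} upgrades this to $\norm{P\psi}_{L^{\psi,1}}\le 2\alpha=2\alpha\,\norm{\psi}_{L^{\psi,1}}$, completing the contraction bound and hence the theorem via \cref{BFS}.
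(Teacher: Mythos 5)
Your proposal is correct and takes essentially the same route as the paper: normalize $\norm{h}_{L^{\psi,1}(\Omega,\mathbb K)}=1$, pass to an Orlicz space via assertion~\cref{LO} of~\cref{Orlicz--Lorentz}, establish $\norm{Ph}_{L^{\Psi}(\Omega,\mathbb K)}\leq\alpha$, and return via assertion~\cref{OL} to obtain the contraction factor $2\alpha$ required by \cref{BFS}. The only difference is that you sketch the Jensen-plus-change-of-variables derivation of the Orlicz estimate (with the $K$ and $L$ multiplicity bookkeeping), whereas the paper simply imports that step from \cite[Remark~4.5 and Lemma~4.4]{MorawiecZurcher}.
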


\begin{proof}
We want to apply~\cref{BFS}. For this purpose it suffices to check that $Ph\in L^{\psi,1}(\Omega,\mathbb K)$ and $\norm{Ph}_{L^{\psi,1}(\Omega,\mathbb K)}<2\alpha$ for every  $h\in L^{\psi,1}(\Omega,\mathbb K)$ with $\norm{h}_{L^{\psi,1}(\Omega,\mathbb K)}=1$.

Fix $h\in L^{\psi,1}(\Omega,\mathbb K)$ with $\norm{h}_{L^{\psi,1}(\Omega,\mathbb K)}=1$. By assertion~\cref{LO} of~\cref{Orlicz--Lorentz} there exists a Young function $\Psi$ such that~\cref{16} holds. Thus $h\in L^\Psi(\Omega,\mathbb K)$ and $\norm{h}_{L^\Psi(\Omega,\mathbb K)}\leq 1$. Applying \cite[Remark~4.5~and~Lemma~4.4]{MorawiecZurcher} (note that we may apply these results as we are actually dealing with the norms of $h$~and~$Ph$, respectively), we conclude that $Ph\in L^\Psi(\Omega,\mathbb K)$ and $\norm{Ph}_{L^\Psi(\Omega,\mathbb K)}\leq\alpha$. This jointly with assertion~\cref{OL} of~\cref{Orlicz--Lorentz} implies that $Ph\in L^{\psi,1}(\Omega,\mathbb K)$ and
$\norm{Ph}_{L^{\psi,1}(\Omega,\mathbb K)}\leq 2\norm{Ph}_{L^{\Psi}(\Omega,\mathbb K)}\leq 2\alpha$.
\end{proof}

Now, we fix and integer $m>1$ and consider the Lorentz space $L^{\psi_m,1}(\Omega,\mathbb K)$ that is generated by the Young function of the form $\psi_m(t)=mt^m$ for every $t\in[0,\infty)$ (see \cite[Section~3.4.1]{EdmundsEvans2004}; cf.\ \cite{KauhanenKoskelaMaly1999} where mappings with derivatives in those Lorentz spaces are considered).
The following result follows from~\cref{Lorentz}.

\begin{corollary}\label{Usual Lorentz}
Assume that \cref{H} holds. Let $m>1$ and let $h\in L^{\psi_m,1}(\Omega,\mathbb K)$. If \cref{14} holds with a real constant $\alpha\in[0,\frac{1}{2})$, then the elementary solution  of equation~\cref{e} in $L^{\psi_m,1}(\Omega,\mathbb K)$ exists, it is the unique solution of equation~\cref{e} in $L^{\psi_m,1}(\Omega,\mathbb K)$ and
\begin{equation*}
\left\|\sum_{k=m}^{\infty}P^k h_0\right\|_{L^{\psi_m,1}(\Omega,\mathbb K)} \leq\frac{(2\alpha)^{m}}{1-2\alpha}\norm{h_0}_{L^{\psi_m,1}(\Omega,\mathbb K)}.
\end{equation*}	
\end{corollary}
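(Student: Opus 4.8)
The plan is to observe that \cref{Usual Lorentz} is nothing more than the special case of \cref{Lorentz} in which the fixed Young function~$\psi$ is taken to be $\psi_m$. Since \cref{Lorentz} was established for an \emph{arbitrary} strictly increasing Young function satisfying condition~$\Delta_2$ globally together with~\cref{N}, the whole corollary reduces to checking that $\psi_m(t)=mt^m$ lies in this class; the existence, uniqueness and the norm estimate then follow by quoting \cref{Lorentz} with $\psi=\psi_m$.

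First I would verify that $\psi_m$ is a Young function in the sense of \cref{def31}. It is continuous, hence left-continuous, it is non-decreasing on $[0,\infty)$, and, because $m>1$, the map $t\mapsto t^m$ is convex, so $\psi_m$ is convex; moreover $\psi_m(0)=0=\lim_{t\to0+}\psi_m(t)$ and $\lim_{t\to\infty}\psi_m(t)=\infty$, and $\psi_m$ is plainly strictly increasing. Next I would confirm condition~$\Delta_2$ globally and~\cref{N}. For the former, $\psi_m(2t)=m(2t)^m=2^m\psi_m(t)$ for every $t\in(0,\infty)$, so \cref{def32} holds with $d=2^m\in(1,\infty)$. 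For the latter, since $m>1$ one has $\frac{\psi_m(t)}{t}=mt^{m-1}\to0$ as $t\to0+$ and $\frac{t}{\psi_m(t)}=\frac{1}{mt^{m-1}}\to0$ as $t\to\infty$, which is exactly~\cref{N}. The auxiliary function~$\tau$ associated with $\psi_m$ (here $\tau(t)=t^m/m$) inherits the required properties automatically, as recorded in the paper.

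With these verifications in hand, $\psi_m$ satisfies every standing hypothesis imposed on the fixed function~$\psi$, so I would simply invoke \cref{Lorentz} with $\psi$ replaced by $\psi_m$. Assumption~\cref{H} and inequality~\cref{14} with $\alpha\in[0,\frac{1}{2})$ transfer unchanged, and the conclusion of \cref{Lorentz} delivers at once the existence and uniqueness of the elementary solution in $L^{\psi_m,1}(\Omega,\mathbb K)$ together with the stated bound. I expect no genuine obstacle: the only labour is the elementary check above. The one point worth flagging is the reuse of the symbol~$m$, which serves both as the exponent in $\psi_m$ and as the lower summation index in the final estimate; this is harmless, since \cref{Lorentz} supplies the estimate for every choice of that lower limit, in particular for the value equal to~$m$.
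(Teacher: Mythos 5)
Your proposal is correct and coincides with the paper's treatment: the authors state that \cref{Usual Lorentz} "follows from" \cref{Lorentz}, and the substance of that deduction is precisely your verification that $\psi_m(t)=mt^m$ with $m>1$ is a strictly increasing Young function satisfying condition $\Delta_2$ globally and \cref{N}. Your explicit checks (including $d=2^m$ and $\tau(t)=t^m/m$) and the remark about the clash of the symbol $m$ are all accurate; no further comment is needed.
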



\section{Lorentz spaces of vector valued functions}\label{LSVV}
We begin with a definition, which collects the nice properties of Lebesgue spaces. To formulate it, we denote by~$\mathcal{M}_{\mathbb{K}}^+$ the subclass of~$\mathcal{M}_{\mathbb{K}}$ of functions that are almost everywhere nonnegative and accept that the symbol $a_k\uparrow a$ means that  $(a_k)_{k\in\mathbb N}\in [0,\infty]^{\mathbb N}$ is increasing and converges to $a\in[0,\infty]$.

\begin{definition}[{see\cite[Definition~3.1.1]{EdmundsEvans2004}}]\label{BFSnorm}
A \emph{Banach function norm} on~$(\Omega,\mu)$ is a map~$\rho\colon \mathcal{M}_{\mathbb{K}}^{+}\to [0,\infty]$ such that for all functions $f,g\in \mathcal{M}_{\mathbb{K}}^+$, sequences $(f_k)_{k\in\mathbb N}$ of functions from $\mathcal{M}_{\mathbb{K}}^+$,  scalars $\lambda\geq 0$ and  $\mu$\nobreakdash-measurable sets~$E\subset \Omega$, the following conditions hold:
\begin{enumerate}[label=(P\arabic*)]
\item\label{P1} $\rho(f)=0\Longleftrightarrow f=0$ $\mu$\nobreakdash-a.e., $\rho(\lambda f)=\lambda \rho(f)$ and $\rho(f+g)\leq \rho(f)+\rho(g)$;
\item\label{P2} $g\leq f$ $\mu$\nobreakdash-a.e. $\Longrightarrow\rho(g)\leq \rho(f)$;
\item\label{P3} $f_k\uparrow f$ $\mu$\nobreakdash-a.e. $\Longrightarrow\rho(f_k)\uparrow \rho(f)$;
\item\label{P4} $\mu(E)<\infty\Longrightarrow\rho(\chi_E)<\infty$;
\item\label{P5} $\mu(E)<\infty\Longrightarrow\int_E f\, d\mu\leq C(E)\rho(f)$ with some $C(E)<\infty$.
\end{enumerate}

If $\rho$ is a Banach function norm on $(\Omega,\mu)$, then the set
\begin{equation*}
X\mathbb K=\{f\in\mathcal{M}_{\mathbb K}: \rho(|f|)<\infty\}
\end{equation*}
is called a \emph{Banach function space}; as usual, we identify functions that are equal a.e.
\end{definition}

The basic result on Banach functions spaces reads as follows.

\begin{theorem}[{see \cite[Theorem~3.1.3]{EdmundsEvans2004}}]\label{thmEE}
Any Banach function space $X\mathbb K$ equipped with the norm $\|f\|_{X\mathbb K}=\rho(|f|)$ is a Banach space.	
\end{theorem}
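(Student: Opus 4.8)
The plan is to first check that $\norm{\cdot}_{X\mathbb K}$ really is a norm on the vector space $X\mathbb K$, and then to establish completeness by the standard criterion that a normed space is complete precisely when every absolutely convergent series converges. All the quantitative work will be powered by the Fatou property (P3), which is the axiom responsible for completeness; the remaining axioms serve only to make $\norm{\cdot}_{X\mathbb K}$ a genuine norm and to control the almost-everywhere behaviour of limits.

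For the norm axioms I would start from the pointwise inequality $\abs{f+g}\le\abs{f}+\abs{g}$, valid for all $f,g\in\mathcal M_{\mathbb K}$. Combining it with the monotonicity (P2) and the subadditivity in (P1) gives $\rho(\abs{f+g})\le\rho(\abs f+\abs g)\le\rho(\abs f)+\rho(\abs g)$, which simultaneously shows that $X\mathbb K$ is closed under addition and that the triangle inequality holds; closure under scalar multiplication and absolute homogeneity follow from $\abs{\lambda f}=\abs\lambda\,\abs f$ together with the positive homogeneity in (P1), applied with the nonnegative scalar $\abs\lambda$. Finally, the definiteness clause of (P1) says $\rho(\abs f)=0$ if and only if $\abs f=0$ almost everywhere, i.e.\ if and only if $f=0$ in $X\mathbb K$ after the identification of functions equal almost everywhere. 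Thus $\norm{\cdot}_{X\mathbb K}$ is a norm.

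For completeness it suffices to show that if $(f_n)$ is a sequence in $X\mathbb K$ with $\sum_{n=1}^{\infty}\norm{f_n}_{X\mathbb K}=:M<\infty$, then $\sum_n f_n$ converges in $X\mathbb K$. I would set $g=\sum_{n=1}^{\infty}\abs{f_n}\in\mathcal M_{\mathbb K}^{+}$ and note that the partial sums $g_k=\sum_{n=1}^k\abs{f_n}$ satisfy $g_k\uparrow g$. The Fatou property (P3) then yields $\rho(g)=\lim_k\rho(g_k)$, while subadditivity bounds each $\rho(g_k)\le\sum_{n=1}^k\rho(\abs{f_n})\le M$; hence $\rho(g)\le M<\infty$. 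To pass from $\rho(g)<\infty$ to $g<\infty$ almost everywhere I would invoke (P5): since $\Omega$ is $\sigma$-finite for Lebesgue measure, I write it as a countable union of sets of finite measure and use $\int_E g\,d\mu\le C(E)\rho(g)<\infty$ on each to conclude $g<\infty$ a.e. Consequently $\sum_n f_n(x)$ converges absolutely in the Banach space $\mathbb K$ for a.e.\ $x$; call its sum $f(x)$, and set $f=0$ on the remaining null set. Then $f$ is measurable, $\abs f\le g$, and (P2) gives $\rho(\abs f)\le\rho(g)<\infty$, so $f\in X\mathbb K$.

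It remains to prove $\norm{f-\sum_{n=1}^k f_n}_{X\mathbb K}\to0$. Here $\abs{f-\sum_{n=1}^k f_n}\le\sum_{n=k+1}^{\infty}\abs{f_n}=:r_k$ almost everywhere, so by (P2) the task reduces to showing $\rho(r_k)\to0$. I expect this last step to be the only genuinely delicate point, because $r_k\downarrow0$ is a \emph{decreasing} limit while (P3) speaks only about increasing ones; the remedy is to apply (P3) to the increasing partial sums of the tail, obtaining $\rho(r_k)\le\sum_{n=k+1}^{\infty}\rho(\abs{f_n})$, which is the tail of the convergent series with sum $M$ and therefore tends to $0$. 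This shows $\sum_n f_n\to f$ in $X\mathbb K$, verifies the absolutely-convergent-series criterion, and hence proves completeness.
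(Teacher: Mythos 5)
Your proposal is correct. Note first that the paper does not actually prove this theorem in the scalar case: it cites \cite[Theorem~3.1.3]{EdmundsEvans2004} and instead reproves the vector-valued analogue (\cref{is BS}) by "following the proof of" that result. Measured against that argument, your route is the same in substance --- the completeness engine is the Fatou property \cref{P3} combined with \cref{P5} to control almost-everywhere behaviour --- but packaged differently. The paper takes an arbitrary Cauchy sequence, extracts a subsequence $(g_n)$ with $\norm{g_{n+1}-g_n}\leq 2^{-(n+1)}$, establishes a.e.\ convergence via \cref{P5}, convergence in measure and the Riesz theorem, and then estimates $\norm{g_m-g}$ by means of an auxiliary Fatou-type lemma ($\norm{\lim f_n}\leq\liminf\norm{f_n}$, the analogue of \cite[Lemma~3.1.2]{EdmundsEvans2004}). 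You instead invoke the criterion that a normed space is complete iff every absolutely convergent series converges; this makes the rapidly-Cauchy subsequence implicit, replaces the convergence-in-measure/Riesz detour by a direct application of \cref{P5} on a $\sigma$-finite exhaustion to get $g<\infty$ a.e., and replaces the auxiliary Fatou lemma by the pointwise domination $\abs{f-\sum_{n\leq k}f_n}\leq r_k$ together with \cref{P2} and the tail bound $\rho(r_k)\leq\sum_{n>k}\rho(\abs{f_n})$ obtained from \cref{P3} applied to the increasing partial sums of the tail. That last step is handled correctly --- you rightly note that \cref{P3} cannot be applied to the decreasing sequence $r_k\downarrow 0$ itself. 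The net effect is a somewhat leaner argument that also records the verification of the norm axioms, which the paper leaves tacit; what it gives up is the explicit Fatou lemma, which the paper needs separately anyway (as \cref{Banach to Banach}) for the vector-valued theory. One cosmetic caveat: statements such as $\rho(g)<\infty$ for $g=\sum\abs{f_n}$ presuppose that $\rho$ is defined on $[0,\infty]$-valued functions, a convention the paper's \cref{BFSnorm} leaves implicit but which is standard and equally needed in the paper's own proof.
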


Now, we will give a brief sketch of the fact that the Lorentz space $L^{\psi,1}(\Omega,\mathbb K)$ equipped with the norm defined by~\cref{Lnorm} is complete. According to~\cref{thmEE} it suffices to show that the map~$\rho\colon \mathcal{M}_{\mathbb{K}}^{+}\to [0,\infty]$ given by
\begin{equation*}\label{BFN}
\rho(f)=\int_0^{\infty}\tau^{-1}(\mu_f(s))\, ds
\end{equation*}
is Banach function norm on~$(\Omega,\mu)$.

We only look at the more involved conditions of~\cref{BFSnorm}. \gdef\isBFS{} Let us focus first on the triangle inequality in~\cref{P1}. For this purpose we need the following definition in which we use the convention that $\inf \emptyset=\infty$.

\begin{definition}[{see \cite[Definition~3.2.3]{EdmundsEvans2004}}]
The \emph{non-increasing rearrangement} of an almost everywhere finite function $f\in \mathcal M_{\mathbb K}$ is the function $f^*\colon \clopen{0}{\infty}\to [0,\infty]$ defined by
\begin{equation*}
f^*(t)=\inf\{\lambda \in \clopen{0}{\infty}:\mu_f(\lambda)\leq t\}.
\end{equation*}
\end{definition}
Switching from the distribution function $\mu_f$ to the nonincreasing rearrangement $f^*$ we have
\begin{dmath}\label{strat 1}
\rho(f)=\int_{0}^{\infty}f^*(\tau(s))\, ds.
\end{dmath}
We know that $\tau$ is convex, but following the proof of the theorem in \cite{CoffeeRoom}, we can prove that it is even bi-Lipschitz on compact intervals. This allows us to use~\cref{Hajlasz} and obtain
\begin{dmath}\label{strat 2}
\rho(f)=\int_{0}^{\infty}f^*(s) (\tau^{-1})'(s)\, ds.
\end{dmath}
Since $\tau$ is convex, it has a nondecreasing right derivative (see e.g. \cite[Theorem~1.3.3]{Convex} or \cite[Theorems 3.7.3~and~3.7.4]{KK96}). Finally, replacing this derivative with a nondecreasing function that is right continuous and agree with the original function almost everywhere, we apply the following result.
\begin{proposition}[{see \cite[Proposition~2.7]{Advanced}}]\label{Subadditivity}
If $\varphi\colon (0,\infty)\to \field{R}$ is right continuous, nonnegative and nonincreasing, then the operator
\begin{equation*}
f\mapsto \int_{0}^{\infty} \varphi(t)f^{*}(t)\, dt	
\end{equation*}
is subadditive.
\end{proposition}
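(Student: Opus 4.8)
The plan is to reduce the subadditivity of the operator $T(f)=\int_0^\infty \varphi(t)f^*(t)\,dt$ to the single classical fact that the primitive $s\mapsto\int_0^s h^*(t)\,dt$ of a non-increasing rearrangement is subadditive, i.e.
\[
\int_0^s (f+g)^*(t)\,dt\le \int_0^s f^*(t)\,dt+\int_0^s g^*(t)\,dt\qquad\text{for all }s\in(0,\infty).
\]
To transfer this to the $\varphi$-weighted integral, the first thing I would do is exploit the hypotheses on $\varphi$ to write it as a layer cake. Since $\varphi$ is nonincreasing, right continuous and nonnegative, the limit $c=\lim_{t\to\infty}\varphi(t)\ge 0$ exists, and the Lebesgue--Stieltjes construction produces a unique nonnegative Borel measure $\nu$ on $(0,\infty)$ with $\nu((a,b])=\varphi(a)-\varphi(b)$ for $0<a<b$. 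Here right continuity is exactly what makes $\nu$ well defined on half-open intervals, and monotonicity is what makes $\nu$ nonnegative; this yields the representation $\varphi(t)=c+\nu((t,\infty))$ for every $t>0$.

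Next I would substitute this representation and apply Tonelli's theorem. Because every integrand in sight is nonnegative, no integrability hypothesis is needed and the behaviour of the product near $0$ requires no separate discussion. Writing $\nu((t,\infty))=\int_{(0,\infty)}\chi_{(t,\infty)}(u)\,d\nu(u)$ and interchanging the order of integration gives
\[
T(h)=c\int_0^\infty h^*(t)\,dt+\int_0^\infty\Big(\int_0^u h^*(t)\,dt\Big)\,d\nu(u),
\]
valid for every $h$ for which the rearrangement is defined. Thus $T(h)$ is assembled entirely from the primitives $\int_0^u h^*$, weighted by the nonnegative constant $c$ and integrated against the nonnegative measure $\nu$.

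The heart of the matter---and the step I expect to be the main obstacle---is the displayed subadditivity of the primitive. I would obtain it from the Hardy--Littlewood--P\'olya variational characterization
\[
\int_0^s h^*(t)\,dt=\sup\Big\{\int_E |h|\,d\mu:\ E\subset\Omega,\ \mu(E)\le s\Big\},
\]
whose ``$\le$'' direction always holds and whose ``$\ge$'' direction relies on the non-atomicity of the Lebesgue measure $\mu$ on $\Omega$ to supply sets $E$ of prescribed measure on which $\int_E|h|$ is nearly extremal. Granting this, for each fixed $s$ and each admissible $E$ one has $\int_E|f+g|\le\int_E|f|+\int_E|g|\le\int_0^s f^*+\int_0^s g^*$ (using $\int_E|f|\le\int_0^{\mu(E)}f^*\le\int_0^s f^*$ and likewise for $g$), and taking the supremum over $E$ yields the subadditivity of the primitive. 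Applying it under the $\nu$-integral and in the first term of the representation above---legitimate precisely because $c\ge 0$ and $\nu\ge 0$---gives $T(f+g)\le T(f)+T(g)$, which is the claim.
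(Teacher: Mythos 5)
The paper gives no proof of this proposition: it is quoted directly from \cite[Proposition~2.7]{Advanced} and used as a black box in the completeness sketch, so there is no in-paper argument to measure yours against. Your proof is correct and is essentially the standard one. The two ingredients are exactly right: (i) the layer-cake representation $\varphi(t)=c+\nu\left((t,\infty)\right)$, where right continuity is what lets the premeasure $\nu\left((a,b]\right)=\varphi(a)-\varphi(b)$ extend to a Borel measure, monotonicity makes $\nu$ nonnegative, and finiteness of $\varphi$ on $(0,\infty)$ makes $\nu$ $\sigma$-finite so that Tonelli applies even when $\varphi$ blows up at $0$; and (ii) the reduction of $T$ to a nonnegative combination of the primitives $u\mapsto\int_0^u h^*$, whose subadditivity you get from the variational characterization $\int_0^s h^*=\sup\{\int_E\lvert h\rvert \,d\mu:\mu(E)\le s\}$ — valid here because Lebesgue measure on $\Omega\subset\mathbb{R}^k$ is non-atomic. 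One cosmetic slip: you have transposed the ``$\le$'' and ``$\ge$'' labels in that characterization. The inequality $\int_E\lvert h\rvert\,d\mu\le\int_0^{\mu(E)}h^*\le\int_0^s h^*$ (the ``$\ge$'' direction as you wrote the identity) is the Hardy--Littlewood direction valid on any measure space, while the direction that needs non-atomicity to supply near-extremal sets of prescribed measure is $\int_0^s h^*\le\sup$. Since your closing display uses each direction for the purpose it actually serves, this mislabeling does not affect the correctness of the argument.
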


Condition~\cref{P3} can be proven by using the fact that~$\tau^{-1}$ is continuous on compact intervals, whereas condition~\cref{P5} can be seen by applying \cite[Proposition~3.2.5]{EdmundsEvans2004} and the fact that $t\mapsto \tau^{-1}(t)/t$ is nonincreasing, which follows from \cite[Lemma]{CoffeeRoom}.

Banach function spaces are originally defined as spaces of $\field{K}$\nobreakdash-valued functions. We will show that the corresponding spaces for functions with Banach space targets are Banach spaces as well. We begin with formal definition of these spaces.

Denote by $\mathcal M_V$ the space of all Lebesgue measurable functions from $\Omega$ to $V$.

\begin{definition}\label{BFSV}	
The set
\begin{equation*}
XV=\{f\in\mathcal M_V:\|f\|_V\in X\field{K}\}
\end{equation*}
is called a \emph{Banach function space $($of vector valued functions$)$}; as in the $\field{K}$\nobreakdash-valued case we identify functions that are equal a.e.
\end{definition}

It is easy to check that the formula
\begin{equation}\label{normV}
\norm{f}_{XV}=\norm{\norm{f}_V}_{X\field{K}}	
\end{equation}
defines a norm on the Banach function space $XV$. We want to prove that the just defined norm is complete. The next lemma, inspired by \cite[Lemma~3.1.2]{EdmundsEvans2004}, is the basic step in the proof.

\begin{lemma}\label{Banach to Banach}
Assume that $(f_n)_{n\in\mathbb N}$ ia a sequence of functions from the Banach function space $XV$ such that $\lim_{n\to \infty}f_n(x)$ exists for almost all $x\in\Omega$. If $\liminf_{n\to \infty}\norm{f_n}_{XV}<\infty$, then $\lim_{n\to \infty}f_n\in XV$ and
\begin{dmath*}
\big\|\lim_{n\hiderel{\to} \infty}f_n\big\|_{XV}\leq \liminf_{n\to \infty}\norm{f_n}_{XV}.
\end{dmath*}
\end{lemma}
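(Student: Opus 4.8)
The plan is to reduce the vector-valued statement to the scalar Fatou property for the Banach function norm~$\rho$, exactly mirroring the $\field{K}$-valued case from \cite[Lemma~3.1.2]{EdmundsEvans2004}. First I would set $f=\lim_{n\to\infty}f_n$, which is defined almost everywhere by hypothesis. Since each $f_n$ is Bochner measurable into the separable Banach space~$V$ and $f$ is their pointwise almost-everywhere limit, $f$ is Bochner measurable, so $f\in\mathcal M_V$. The crucial observation is that the norm $\norm{\cdot}_V\colon V\to[0,\infty)$ is continuous, so from $f_n(x)\to f(x)$ in~$V$ for almost all~$x$ we obtain $\norm{f_n(x)}_V\to\norm{f(x)}_V$ for almost all~$x$. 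Writing $g_n=\norm{f_n}_V$ and $g=\norm{f}_V$, we have thus reduced matters to a sequence of nonnegative measurable $\field{K}$-valued functions with $g_n\to g$ almost everywhere, and by~\cref{normV} we have $\rho(g_n)=\norm{f_n}_{XV}$ and $\rho(g)=\norm{f}_{XV}$.

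It then remains to establish the scalar Fatou inequality $\rho(g)\leq\liminf_{n\to\infty}\rho(g_n)$, which follows directly from axioms~\cref{P2} and~\cref{P3} of~\cref{BFSnorm}. I would put $h_m=\inf_{n\geq m}g_n$; each $h_m$ is nonnegative and measurable, one has $h_m\leq g_n$ for all $n\geq m$, and $h_m\uparrow\liminf_{n\to\infty}g_n=g$ almost everywhere. By~\cref{P2}, $\rho(h_m)\leq\rho(g_n)$ for every $n\geq m$, hence $\rho(h_m)\leq\inf_{n\geq m}\rho(g_n)$; by~\cref{P3}, $\rho(h_m)\uparrow\rho(g)$. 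Letting $m\to\infty$ yields $\rho(g)\leq\liminf_{n\to\infty}\rho(g_n)=\liminf_{n\to\infty}\norm{f_n}_{XV}<\infty$. The finiteness shows that $g=\norm{f}_V\in X\field{K}$, i.e.\ $f\in XV$, and the inequality is precisely the claimed bound $\norm{f}_{XV}\leq\liminf_{n\to\infty}\norm{f_n}_{XV}$.

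The argument is essentially routine once the reduction through the norm is made, so there is no genuine obstacle. The one point deserving care is the measurability of the limit~$f$: here one must invoke that $V$ is separable, so that the almost-everywhere pointwise limit of Bochner measurable functions is again Bochner measurable (equivalently, via Pettis' theorem). Everything else is simply the transfer of the scalar Fatou property through the pointwise convergence $\norm{f_n}_V\to\norm{f}_V$.
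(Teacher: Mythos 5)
Your proposal is correct and follows essentially the same route as the paper's proof: your $h_m=\inf_{n\geq m}g_n$ is exactly the paper's auxiliary sequence, and the combination of \cref{P2} and \cref{P3} to pass from $\rho(h_m)\leq\inf_{n\geq m}\rho(g_n)$ to $\rho(g)\leq\liminf_{n\to\infty}\rho(g_n)$ is the same ``inf trick'' used there. Your explicit remarks on the continuity of $\norm{\cdot}_V$ and on Bochner measurability of the limit via separability of $V$ only make precise a step the paper treats as clear.
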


\begin{proof}
It is clear that the formula
\begin{equation*}
f(x)=\begin{cases}
\lim_{n\to\infty}f_n(x),&\hbox{ if }\lim_{n\to \infty}f_n(x)\hbox{ exists}\\
0,&\mbox{otherwise}
\end{cases}
\end{equation*}
defines a function belonging to $\mathcal M_V$.

For every $n\in\mathbb N$ we define the function $g_n\colon \Omega\to[0,\infty]$ by putting
\begin{equation*}
g_n(x)=\inf_{m\geq n}\norm{f_m(x)}_V.
\end{equation*}
Then for almost all $x\in\Omega$ we have
\begin{dmath*}
\lim_{n\hiderel{\to}\infty}g_n(x)=\lim_{n\to \infty}\inf_{m\geq n}\norm{f_m(x)}_V =\liminf_{n\to \infty}\norm{f_n(x)}_V=\norm{f(x)}_V.
\end{dmath*}
Hence
\begin{dmath}\label{inf trick}
\norm{f}_{XV}=\norm{\norm{f}_V}_{X\field{K}}=\norm{\lim_{n\to \infty}g_n}_{X\field{K}}=\lim_{n\to \infty}\norm{\inf_{m\geq n}\norm{f_m}_V}_{X\field{K}}.
\end{dmath}

Since for all $l,n\in\mathbb N$ with $l\geq n$ we have $\inf_{m\geq n} \norm{f_m}_V\leq \norm{f_l}_V$, it follows by~\cref{P2} that
\begin{dmath*}
\norm{\inf_{m\geq n} \norm{f_m}_V}_{X\field{K}}\leq \norm{\norm{f_l}_V}_{X\field{K}}.
\end{dmath*}
Therefore,
\begin{dmath*}
\norm{\inf_{m\geq n} \norm{f_m}_V}_{X\field{K}}=\inf_{l\geq n} \norm{\inf_{m\geq n} \norm{f_m}_V}_{X\field{K}}\leq \inf_{l\geq n}\norm{\norm{f_l}_V}_{X\field{K}},
\end{dmath*}
which jointly with~\cref{inf trick} gives
\begin{dmath*}
\norm{f}_{XV}\leq \lim_{n\to \infty}\inf_{l\geq n}\norm{\norm{f_l}_V}_{X\field{K}}
=\liminf_{n\to \infty}\norm{\norm{f_n}_V}_{X\field{K}}
=\liminf_{n\to \infty}\norm{f_n}_{XV}<\infty.
\end{dmath*}
In consequence, $f\in XV$.
\end{proof}

\begin{theorem}\label{is BS}
The Banach function space $XV$ equipped with the norm defined by \eqref{normV} is a Banach space.	
\end{theorem}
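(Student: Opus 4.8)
The plan is to prove completeness directly, by showing that every Cauchy sequence in $XV$ converges, with \cref{Banach to Banach} serving as the main engine; this mirrors the classical scalar argument that underlies \cref{thmEE}. I would begin with an arbitrary Cauchy sequence $(f_n)_{n\in\mathbb N}$ in $XV$ and, after passing to a subsequence and relabelling, arrange that $\norm{f_{n+1}-f_n}_{XV}\leq 2^{-n}$ for every $n\in\mathbb N$. Since a Cauchy sequence converges as soon as one of its subsequences does (and to the same limit), it suffices to handle this relabelled subsequence.

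Next I would manufacture an almost everywhere pointwise limit. To this end I would consider the nonnegative measurable functions $g_m=\sum_{n=1}^{m}\norm{f_{n+1}-f_n}_V$ and $g=\sum_{n=1}^{\infty}\norm{f_{n+1}-f_n}_V$. Using the triangle inequality from~\cref{P1} together with the definition~\cref{normV} of the norm, I get $\norm{g_m}_{X\field{K}}\leq\sum_{n=1}^{m}\norm{\norm{f_{n+1}-f_n}_V}_{X\field{K}}=\sum_{n=1}^{m}\norm{f_{n+1}-f_n}_{XV}\leq 1$. Because $g_m\uparrow g$ almost everywhere, the Fatou property~\cref{P3} gives $\rho(g)=\lim_{m\to\infty}\norm{g_m}_{X\field{K}}\leq 1<\infty$. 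Since $\Omega$ is $\sigma$\nobreakdash-finite, property~\cref{P5} then forces $g$ to be integrable, hence finite, on every subset of $\Omega$ of finite measure, so $g<\infty$ almost everywhere. For almost every $x\in\Omega$ the series $\sum_{n=1}^{\infty}\bigl(f_{n+1}(x)-f_n(x)\bigr)$ is therefore absolutely convergent in the Banach space $V$, and hence $\bigl(f_n(x)\bigr)_{n\in\mathbb N}$ converges; call its limit $f(x)$.

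Finally I would invoke \cref{Banach to Banach} twice. Applied to $(f_n)_{n\in\mathbb N}$ itself it yields $f\in XV$, the hypothesis $\liminf_n\norm{f_n}_{XV}<\infty$ being automatic because a Cauchy sequence is bounded. Applied, for each fixed $n$, to the sequence $(f_m-f_n)_{m\in\mathbb N}$, whose almost everywhere pointwise limit is $f-f_n$, it gives $\norm{f-f_n}_{XV}\leq\liminf_{m\to\infty}\norm{f_m-f_n}_{XV}$. Given $\varepsilon>0$, the Cauchy condition supplies an index $N$ with $\norm{f_m-f_n}_{XV}\leq\varepsilon$ for all $m,n\geq N$, so $\norm{f-f_n}_{XV}\leq\varepsilon$ for all $n\geq N$, and thus $f_n\to f$ in $XV$. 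The only genuinely delicate step is the second paragraph, namely the passage from a Cauchy sequence to an almost everywhere pointwise convergent one: this is exactly where the order properties~\cref{P1}, \cref{P3} and~\cref{P5} of the Banach function norm, the $\sigma$\nobreakdash-finiteness of $\Omega$, and the completeness of $V$ all enter. Everything beyond that is a formal consequence of \cref{Banach to Banach}.
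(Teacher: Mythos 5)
Your proof is correct and follows essentially the same route as the paper's: extract a rapidly convergent subsequence, bound the scalar majorant $\sum_{n}\norm{f_{n+1}-f_n}_V$ using \cref{P1} and the Fatou property \cref{P3}, deduce almost everywhere absolute convergence in $V$, and conclude with \cref{Banach to Banach}. The only divergence is that you obtain finiteness of the majorant almost everywhere directly from \cref{P5} and $\sigma$\nobreakdash-finiteness, whereas the paper takes a longer detour through convergence in measure and the Riesz theorem; your shortcut is valid and in fact a bit cleaner.
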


\begin{proof}
We will follow the proof of \cite[Theorem~3.1.3]{EdmundsEvans2004}.

Assume that $(f_n)_{n\in\mathbb N}$ is a Cauchy sequence of functions from~$XV$ and let $(g_n)_{n\in\mathbb N}$ be one of its subsequences such that
$\|g_{n+1}-g_n\|_{XV}\leq\frac{1}{2^{n+1}}$ for every $n\in\mathbb N$.
Setting $g_0=0$, for every $n\in \field{N}$ we put $h_n=g_n-g_{n-1}$ and note that $h_n\in\mathcal M_V$. Next for all $x\in\Omega$ and $N\in\mathbb N$ we put
\begin{equation*}
G_N(x)=\sum_{n=1}^{N}\norm{h_n(x)}_V\quad\hbox{and}\quad
G(x)=\sum_{n=1}^{\infty}\norm{h_n(x)}_V.
\end{equation*}
Then $G_N,G\in\mathcal M_{\mathbb K}$ and
\begin{dmath}\label{GN finite}
\norm{G_N}_{X\field{K}}\leq\sum_{n=1}^{\infty}\norm{\norm{h_n}_V}_{X\field{K}}
=\sum_{n=1}^{\infty}\norm{h_n}_{XV}\leq \norm{h_1}_{XV}+1<\infty
\end{dmath}
for every $N\in\mathbb N$. Since $G_N\uparrow G$, it follows by~\cref{P3} that $G\in X\field{K}$.

Fix $\varepsilon>0$ and $E\subset \Omega$ such that $\mu(E)<\infty$. By~\cref{P5}, we see that
\begin{align*}\label{meas conv}
\lim_{N\to\infty}\mu(\{x\in E: \abs{G(x)-G_N(x)}>\varepsilon\})&\leq\lim_{N\to\infty} \frac{1}{\varepsilon}\int_{E}\abs{G(x)-G_N(x)}\, d\mu(x)\\
&\leq\frac{C(E)}{\varepsilon}\lim_{N\to\infty}\norm{\abs{G-G_N}}_{X\field{K}}\\
&=\frac{C(E)}{\varepsilon}\lim_{N\to\infty}\sum_{n=N}^{\infty} \norm{g_{n+1}-g_n}_{XV}\\
&=\frac{C(E)}{\varepsilon}\lim_{N\to\infty}\frac{1}{2^N}=0.
\end{align*}
Thus the sequence $(G_N)_{N\in\mathbb N}$ converges to~$G$ in Lebesgue measure on~$E$. Applying the Riesz theorem (see e.g.\ \cite[Theorem~11.26]{hewitt2012real}) there exists a subsequence of~$(G_N)_{N\in\mathbb N}$ which converges to~$G$ a.e.\ on~$E$. Since the $k$\nobreakdash-dimensional Lebesgue measure is $\sigma$\nobreakdash-finite, we can apply the denationalization method to obtain that there exists a subsequence of $(G_N)_{N\in\mathbb N}$ which converges to $G$ a.e.\ on $\Omega$. As we know that  $G\in X\field{K}$, it follows that the series $\sum_{n=1}^{\infty}\norm{h_n(x)}_V$ is finite for almost all $x\in\Omega$. Thus
\begin{dmath*}
\sum_{n\hiderel{=}1}^{\infty}h_n(x)\in V
\end{dmath*}
for almost all $x\in \Omega$.

Now, we define the function $g\in XV$ by putting
\begin{equation*}
g(x)=\begin{cases}
\lim_{n\to\infty}g_n(x),&\hbox{ if }\sum_{n\hiderel{=}1}^{\infty}h_n(x)\in V\\
0,&\mbox{otherwise}.
\end{cases}
\end{equation*}
Our goal is to show that $g$ is the limit of~$(f_n)_{n\in\mathbb N}$; note that to achieve this, it is enough to show that it is the limit of~$(g_n)_{n\in\mathbb N}$.

Fix $m\in \field{N}$. Then
\begin{equation}\label{liminf}
\begin{split}
\liminf_{n\hiderel{\to} \infty}\norm{g_m-g_n}_{XV}&=\liminf_{n\to \infty}\Big\|\sum_{k=m+1}^{n}h_k\Big\|_{XV}\leq \liminf_{n\to \infty}\sum_{k=m+1}^{\infty}\norm{h_k}_{XV}\\
&\leq\frac{1}{2^m}.
\end{split}	
\end{equation}
Applying now~\cref{Banach to Banach} we conclude that $g_m-g\in XV$ and $\norm{g_m-g}_{XV}\leq \liminf_{n\to \infty}\norm{g_m-g_n}_{XV}$. Hence  $g=g_m-(g_m-g)\in XV$ and by~\cref{liminf} we have
\begin{dmath*}
\lim_{m\hiderel{\to} \infty}\norm{g_m-g}_{XV}\leq \lim_{m\to \infty}\liminf_{n\to \infty}\norm{g_m-g_n}_{XV}
\leq \lim_{m\to \infty} \frac{1}{2^m}=0.
\end{dmath*}
Hence $(g_n)_{n\in\mathbb N}$ converges to some~$g$. Since $(f_n)_{n\in\mathbb N}$ is a Cauchy sequence and $(g_n)_{n\in\mathbb N}$ one of its subsequences, it follows that $(f_n)_{n\in\mathbb N}$ converges to $g$, which completes the proof.
\end{proof}

We end this paper with a counterpart of~\cref{Lorentz} for vector valued functions. We omit its proof as the boundedness of the considered operator can be proven by looking at the norm of the function instead of at the function itself.

\begin{theorem}\label{LorentzV}
Assume that \cref{H} holds and let $h_0\in L^{\psi,1}(\Omega,V)$. If there exists a real constant $\alpha\in[0,\frac{1}{2})$ such that
\begin{equation}\label{l14V}
\setlength{\jot}{-3pt}
\begin{split}
|g_n(x)|\leq\alpha\min\left\{\frac{|J_{f_n}(x)|}{KL},\frac{1}{N}\right\}\quad&\hbox{for all }n\in\{1,\ldots,N\}\hbox{ and}\\&\hbox{almost all }x\in \Omega,
\end{split}
\end{equation}
then the elementary solution  of equation~\cref{e} in $L^{\psi,1}(\Omega,V)$ exists, it is the unique solution of equation~\cref{e} in $L^{\psi,1}(\Omega,V)$ and
\begin{equation*}
\left\|\sum_{k=m}^{\infty}P^k h_0\right\|_{L^{\psi,1}(\Omega,V)} \leq\frac{(2\alpha)^{m}}{1-2\alpha}\norm{h_0}_{L^{\psi,1}(\Omega,V)}.
\end{equation*}	
\end{theorem}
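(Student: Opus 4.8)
The plan is to deduce the result from \cref{BFS} exactly as in the scalar-valued \cref{Lorentz}, the only new ingredient being the passage from the value $Ph$ to its norm $\norm{Ph}_V$. First I would record that $L^{\psi,1}(\Omega,V)$ is precisely the space $XV$ associated with the Banach function space $X\field{K}=L^{\psi,1}(\Omega,\field{K})$, so that by \cref{is BS} it is a Banach space and \cref{BFS} applies. Since $P$ is linear, it then suffices to verify that $P$ maps $L^{\psi,1}(\Omega,V)$ into itself and that
\begin{equation*}
\norm{Ph}_{L^{\psi,1}(\Omega,V)}\leq 2\alpha\quad\text{whenever }\norm{h}_{L^{\psi,1}(\Omega,V)}=1,
\end{equation*}
which exhibits $P$ as a contraction with factor $2\alpha<1$ satisfying~\cref{PF}.

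Fix such an $h$. Using \cref{H} and \cref{lem21} I would first check that $Ph\in\mathcal M_V$ and that each scalar function $\norm{h}_V\circ f_n$ is measurable. The key step is the pointwise triangle inequality
\begin{equation*}
\norm{(Ph)(x)}_V=\Big\|\sum_{n=1}^{N}g_n(x)\,h(f_n(x))\Big\|_V\leq \sum_{n=1}^{N}\abs{g_n(x)}\,\norm{h(f_n(x))}_V,
\end{equation*}
whose right-hand side is obtained by applying the scalar operator with the nonnegative coefficients $\abs{g_n}$ to the nonnegative scalar function $\norm{h}_V$. By the monotonicity property~\cref{P2} of the Banach function norm defining $L^{\psi,1}(\Omega,\field{K})$, this yields
\begin{equation*}
\norm{Ph}_{L^{\psi,1}(\Omega,V)}=\big\|\,\norm{Ph}_V\big\|_{L^{\psi,1}(\Omega,\field{K})}\leq\Big\|\sum_{n=1}^{N}\abs{g_n}\cdot(\norm{h}_V\circ f_n)\Big\|_{L^{\psi,1}(\Omega,\field{K})}.
\end{equation*}

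The scalar function $u=\norm{h}_V$ belongs to $L^{\psi,1}(\Omega,\field{K})$ and satisfies $\norm{u}_{L^{\psi,1}(\Omega,\field{K})}=\norm{h}_{L^{\psi,1}(\Omega,V)}=1$, while the coefficients $\abs{g_n}$ obey the bound~\cref{l14V}. Hence I would estimate the right-hand side verbatim as in the proof of \cref{Lorentz}: pass to an Orlicz space via assertion~\cref{LO} of \cref{Orlicz--Lorentz}, apply the Orlicz-norm bound of \cite[Remark~4.5 and Lemma~4.4]{MorawiecZurcher} to produce a factor $\alpha$, and return to the Lorentz norm via assertion~\cref{OL}, producing the factor $2\alpha$. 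This gives $\norm{Ph}_{L^{\psi,1}(\Omega,V)}\leq 2\alpha$, so \cref{BFS} furnishes the existence and uniqueness of the elementary solution of~\cref{e} in $L^{\psi,1}(\Omega,V)$ together with the stated tail estimate (with contraction factor $2\alpha$).

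The only delicate point — and the place where I expect the (mild) main obstacle to lie — is the claim that the scalar estimate applies unchanged to $u=\norm{h}_V$ with the coefficients $\abs{g_n}$. This is resolved by the observation that both the Orlicz and the Lorentz norms depend on a function only through its distribution function, hence only through its modulus; consequently the bound of \cite[Lemma~4.4]{MorawiecZurcher} sees only $\abs{g_n}$ and $\norm{h}_V$, and the signs (or complex phases) of the $g_n$ and the vector structure of $h$ play no role once the triangle inequality has been used.
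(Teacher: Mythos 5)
Your proposal is correct and follows exactly the route the paper intends: the authors omit the proof precisely because, as they say, ``the boundedness of the considered operator can be proven by looking at the norm of the function instead of at the function itself,'' which is your triangle-inequality-plus-\cref{P2} reduction to the scalar case of \cref{Lorentz}, combined with the completeness of $L^{\psi,1}(\Omega,V)$ from \cref{is BS} so that \cref{BFS} applies. Nothing further is needed.
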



{\bf Acknowledgement.} The research was supported by the University of Silesia  Mathematics Department (Iterative Functional Equations and Real Analysis program).

\bibliographystyle{plain}
\bibliography{bibliography}

\begin{thebibliography}{10}

\bibitem{CoffeeRoom}
Classroom {N}otes: {E}very {C}onvex {F}unction is {L}ocally {L}ipschitz.
\newblock {\em Amer. Math. Monthly}, 79(10):1121--1124, 1972.

\bibitem{BaronJarczyk2001}
Karol Baron and Witold Jarczyk.
\newblock Recent results on functional equations in a single variable,
  perspectives and open problems.
\newblock {\em Aequationes Math.}, 61(1-2):1--48, 2001.

\bibitem{BelitskiiTkachenko2003}
G.~Belitskii and V.~Tkachenko.
\newblock {\em One-dimensional functional equations}, volume 144 of {\em
  Operator Theory: Advances and Applications}.
\newblock Birkh\"{a}user Verlag, Basel, 2003.

\bibitem{BennettSharpley1988}
Colin Bennett and Robert Sharpley.
\newblock {\em Interpolation of operators}, volume 129 of {\em Pure and Applied
  Mathematics}.
\newblock Academic Press, Inc., Boston, MA, 1988.

\bibitem{EdmundsEvans2004}
David~E. Edmunds and W.~Desmond Evans.
\newblock {\em Hardy operators, function spaces and embeddings}.
\newblock Springer Monographs in Mathematics. Springer-Verlag, Berlin, 2004.

\bibitem{Hajlasz1993}
Piotr Haj{\l}asz.
\newblock Change of variables formula under minimal assumptions.
\newblock {\em Colloq. Math.}, 64(1):93--101, 1993.

\bibitem{HeinonenKoskelaShanmugalingamTyson2015}
Juha Heinonen, Pekka Koskela, Nageswari Shanmugalingam, and Jeremy~T. Tyson.
\newblock {\em Sobolev spaces on metric measure spaces. An approach based on
  upper gradients}, volume~27 of {\em New Mathematical Monographs}.
\newblock Cambridge University Press, Cambridge, 2015.

\bibitem{hewitt2012real}
Edwin Hewitt and Karl Stromberg.
\newblock {\em Real and abstract analysis. {A} modern treatment of the theory
  of functions of a real variable}.
\newblock Springer-Verlag, New York, 1965.

\bibitem{KK96}
R.~Kannan and Carole~King Krueger.
\newblock {\em Advanced analysis on the real line}.
\newblock Universitext. Springer-Verlag, New York, 1996.

\bibitem{KauhanenKoskelaMaly1999}
Janne Kauhanen, Pekka Koskela, and Jan Mal\'{y}.
\newblock On functions with derivatives in a {L}orentz space.
\newblock {\em Manuscripta Math.}, 100(1):87--101, 1999.

\bibitem{Kozek1977}
Andrzej Kozek.
\newblock Orlicz spaces of functions with values in {B}anach spaces.
\newblock {\em Comment. Math. Prace Mat.}, 19(2):259--288, 1976/77.

\bibitem{Kuczma1968}
Marek Kuczma.
\newblock {\em Functional equations in a single variable}.
\newblock Monografie Matematyczne, Tom 46. Pa\'{n}stwowe Wydawnictwo Naukowe,
  Warsaw, 1968.

\bibitem{KuczmaChoczewskiGer1990}
Marek Kuczma, Bogdan Choczewski, and Roman Ger.
\newblock {\em Iterative functional equations}, volume~32 of {\em Encyclopedia
  of Mathematics and its Applications}.
\newblock Cambridge University Press, Cambridge, 1990.

\bibitem{Lorentz1950}
G.~G. Lorentz.
\newblock Some new functional spaces.
\newblock {\em Ann. of Math. (2)}, 51:37--55, 1950.

\bibitem{Advanced}
Jan Mal{\`y}.
\newblock Advanced theory of differentiation -- {L}orentz spaces, 2003.
\newblock \url{https://www.karlin.mff.cuni.cz/~maly/lorentz.pdf}.

\bibitem{MalySwansonZiemer2009}
Jan Mal\'{y}, David Swanson, and William~P. Ziemer.
\newblock Fine behavior of functions whose gradients are in an {O}rlicz space.
\newblock {\em Studia Math.}, 190(1):33--71, 2009.

\bibitem{Matkowski1975}
Janusz Matkowski.
\newblock Integrable solutions of functional equations.
\newblock {\em Dissertationes Math. (Rozprawy Mat.)}, 127:68, 1975.

\bibitem{MorawiecZurcher2019}
Janusz Morawiec and Thomas Z\"{u}rcher.
\newblock Some classes of linear operators involved in functional equations.
\newblock {\em Ann. Funct. Anal.}, 10(3):381--394, 2019.

\bibitem{MorawiecZurcher}
Janusz Morawiec and Thomas Z\"{u}rcher.
\newblock Linear functional equations and their solutions in generalized
  {O}rlicz spaces, 2020.
\newblock Manuscript.

\bibitem{Convex}
Constantin~P. Niculescu and Lars-Erik Persson.
\newblock {\em Convex functions and their applications}, volume~23 of {\em CMS
  Books in Mathematics/Ouvrages de Math\'{e}matiques de la SMC}.
\newblock Springer, New York, 2006.
\newblock A contemporary approach.

\bibitem{Nikodem1991}
Kazimierz Nikodem.
\newblock On {$\epsilon$}-invariant measures and a functional equation.
\newblock {\em Czechoslovak Math. J.}, 41(116)(4):565--569, 1991.

\bibitem{Saks1964}
S.~Saks.
\newblock {\em Theory of the integral}.
\newblock Second revised edition. English translation by L. C. Young. With two
  additional notes by Stefan Banach. Dover Publications, Inc., New York, 1964.

\bibitem{SteinWeiss1971}
Elias~M. Stein and Guido Weiss.
\newblock {\em Introduction to {F}ourier analysis on {E}uclidean spaces}.
\newblock Princeton University Press, Princeton, N.J., 1971.
\newblock Princeton Mathematical Series, No. 32.

\bibitem{Whitney1951}
H.~Whitney.
\newblock On totally differentiable and smooth functions.
\newblock {\em Pacific J. Math.}, 1:143--159, 1951.

\end{thebibliography}
\end{document}